\newcommand{\real}{\mathbb{R}}
\newcommand{\integer}{\mathbb{Z}}
\newcommand{\lam}{\lambda}
\newcommand{\Lam}{\Lambda}
\newcommand{\om}{\omega}
\newcommand{\til}{\tilde}
\newcommand{\mrP}{\mathrm{P}}
\def\inpro#1{\langle#1\rangle}
\newtheorem{thm}{Theorem}
\newtheorem{result}{Result}
\newtheorem{df}{Definition}
\newlength{\figurewidth}
\newlength{\smallfigurewidth}
\begin{document}

\urldef\scat\url{https://github.com/kymatio/kymatio}
\urldef\scatmax\url{https://github.com/TaekyungKi/Scattering_maxp}

\title
{\large
\textbf{Deep Scattering Network with Max-pooling}
}

\author{%
	Taekyung Ki$^{\rm a}$ and Youngmi Hur$^{\rm a,b}$\\[0.5em]
	{\small\begin{minipage}{\linewidth}\begin{center}
				\begin{tabular}{ccc}
					$^{\rm a}$School of Mathematics and & & $^{\rm b}$School of Mathematics\\ 
					Computing (Mathematics)&& Korea Institute for Advanced Study\\
					Yonsei University&& Seoul, 02455, Republic of Korea\\
					Seoul, 03722, Republic of Korea &&\\
					\url{key_@yonsei.ac.kr},  \url{yhur@yonsei.ac.kr} && \\
				\end{tabular}
	\end{center}\end{minipage}}
}

\maketitle
\thispagestyle{empty}

\begin{abstract}
Scattering network is a convolutional network, consisting of cascading convolutions using pre-defined wavelets followed by the modulus operator. Since its introduction in 2012, the scattering network is used as one of few mathematical tools explaining components of the convolutional neural networks (CNNs). However, a pooling operator, which is one of main components of conventional CNNs, is not considered in the original scattering network.

In this paper, we propose a new network, called scattering-maxp network, integrating the scattering network with the max-pooling operator. We model continuous max-pooling, apply it to the scattering network, and get the scattering-maxp network. We show that the scattering-maxp network shares many useful properties of the scattering network including translation invariance, but with much smaller number of parameters. Numerical experiments show that the use of scattering-maxp does not degrade the performance too much and is much faster than the original one, in image classification tasks.
\end{abstract}
\section{Introduction}

Convolutional neural networks (CNNs) \cite{CNN} consist of two steps. The first is the feature extraction from the input by convolution layers and the second is the inference by the fully-connected (FC) layers. Each kernel in the layers would be optimized during the training session. This optimization process is often referred to as \textbf{learning}. Despite the successful application of CNNs \cite{DL}, there are only few tools that provide mathematical explanations of components of the CNNs. 

Scattering network is first introduced by S. Mallat \cite{GIS} in 2012, and since then it is used in many researches. For example, the papers \cite{ISN,SST} replace the feature extraction process in CNNs with the scattering network. The scattering network uses the pre-defined wavelets as the convolution kernels. With pre-defined wavelets, learning does not occur in the feature extraction step so it helps to understand the feature extraction step of CNNs. The scattering network also gives mathematical explanation for translation invariance \cite{GIS} which is a common assumption of CNNs \cite{DL}.
 
The scattering network does not use a pooling operator, which is one of the main components of CNNs. The pooling operator reduces computational complexity, and among many approaches, the max-pooling is the most widely used  \cite{DL}. As the max-pooling is defined on discrete domains, we first propose a \textbf{continuous max-pooling} for continuous domains. We then combine it with the scattering network to obtain the new convolutional network, which we name as \textbf{scattering-maxp network}. The use of the max-pooling reduces the computational cost of the scattering network, while still providing the translation invariance. Numerical experiments on image classification show that our scattering-maxp network reduces the number of parameters and training time of the original one, without too much degradation in performance. Below we summarize some of the notation we use in this paper.

Let $L^2(\real^d) := \{ f : \real^d \to \real : \|f\|_{2}  < \infty \} $, $L^{\infty}(\real^d) := \{ f: \real^d \to \real: \| f\|_{\infty}  < \infty\}$, where $\|f\|_{2} := (\int_{\real^d} |f(x)|^2 dx)^{1/2}$,  and $\| f\|_{\infty} := \inf\{ \alpha >0 : |f(x)| \leq \alpha ~ a.e.~ x \in \real^d\}$.
For $f : \real^d \to \real$, we let ${\rm supp}(f) := {\rm closure}(\{ x \in  \real^d : f(x) \neq 0 \})$. For $f,g \in L^{2}(\real^d)$, the convolution is defined as $(f * g)(y) := \int_{\real^d} f(x) g(y-x) dx$. We use $\hat{f}\in L^{2}(\real^d)$ to denote the Fourier transform of $f \in L^{2}(\real^d)$. For example, when $f$ has a compact support, $\hat{f}(\om) := \int_{\real^d} f(x) e^{- i \inpro{x,\om}} dx$, with $\inpro{x,\om}$ the inner product of $x$ and $\om$ in $\real^d$.

\section{Scattering Network and Scattering-maxp Network}
\subsection{Scattering Network}

In this subsection, we focus on the structure of the scattering network, and refer to \cite{GIS} for more details. The scattering network consists of two parts: feature propagation and low-pass filtering. The feature propagation  is computed by iterative convolutions and the modulus operator where the convolution kernels are the pre-defined wavelets. More precisely, the $d$-dimensional scattering wavelet $\psi_{j,r}$ is defined from the wavelet $\psi \in L^{2}(\real^d)$ by 
\[ \psi_{j, r}(x) := 2^{dj} \psi (2^j r^{-1} x),\]
for an integer $ j > -J $, with fixed $J \in \integer$, and for $r \in G^{+} := G / \{+1, -1\}$, where $G$ is a subgroup of the orthogonal group $O(d)$.  We let $\Lam_{J}^1 := \{( j, r) ~:~ j > -J ~\text{and}~ r \in G^{+}\}$. Let $f \in L^{2}(\real^d)$ be an input. For $p := (\lam_{1} , \lam_{2}, \cdots, \lam_m) \in \Lam_{J}^{m} := \Lam_{J}^1 \times\Lam_{J}^1\times\cdots \times \Lam_{J}^1$ ($m$ times), a \textbf{scattering propagator} $U[p] : L^{2}(\real^d) \to L^{2}(\real^d)$ is given as
$$
	U[p]f(x) :=  U[\lam_m]\cdots U[\lam_2]U[\lam_1]f(x), ~~ \forall f \in L^{2}(\real^d),
$$
where $U[\lam_k]f(x) := |(\psi_{\lam_k} *f)(x)|$ for $k = 1, 2, \cdots, m$ and $U[\emptyset]f(x) := f(x)$.
We call $p\in \Lam_{J}^{m}$ a \textbf{path} of length $m$. In Figure \ref{fg:network}, possible propagations along two different paths $p$ and $q$ are depicted. As we compute $m$ times of convolutions and modulus operators along the path of $p$ of length $m$, the input $f$ is propagated to the $m$-th layer.

\begin{figure}[t]
	\centering
	\includegraphics[width = 1\textwidth]{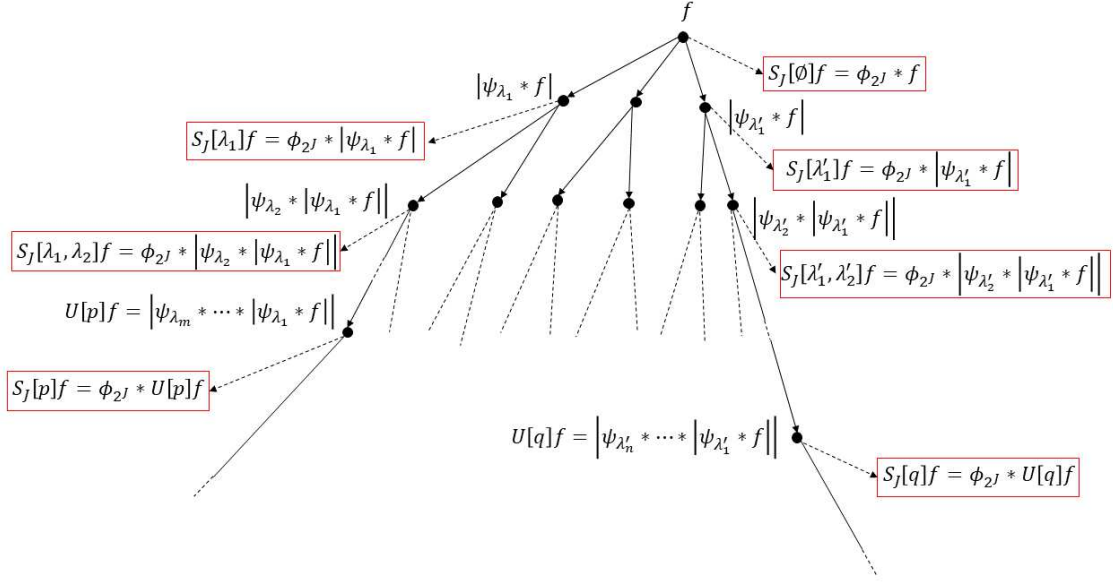}
	\caption{Overview of the scattering network. Let $p :=(\lam_1, \cdots, \lam_m)$ be a path of length $m$ and $q := (\lam'_1,  \cdots, \lam'_{n})$ of length $n$. The input $f$ is propagated to the $m$-th layer along the path $p$ (length $m$) and the $n$-th layer along the path $q$ (length $n$). \label{fg:network}}
	\end{figure}

The output of $U[p]f$ is low-pass filtered with the kernel $\phi_{2^J}(x) := 2^{-dJ}\phi(2^{-J}x)$, which we call \textbf{low-pass kernel} in this paper. A \textbf{windowed scattering transform} $S_J[p] : L^{2}(\real^d) \to L^2(\real^d)$ is defined by
\begin{equation} S_J[p]f(x) := (\phi_{2^J} * U[p]f)(x). \label{eq:window scattering}
\end{equation}
The outputs of the $\{S_J[p]f\}_{p \in \Lam_{J}^m}$ constitute the $m$-th layer of the scattering network. Conditions on  the scattering wavelet $\{ \psi_{\lam} \}_{ \lam \in \Lam_{J}^1 }$ and the low-pass kernel $\phi_{2^J}$ that guarantee certain properties of the scattering network are studied in \cite{GIS}. We omit most of the details about these conditions, but mention the admissibility condition (c.f. Result~\ref{result:TR}) and the condition \eqref{eq:unitary} in Section~\ref{section:TR}.

\subsection{Continuous Max-pooling and Construction of Scattering-maxp Network}
\label{section: scattering-maxp}

Pooling operators are used to effectively reduce the computational cost of the network. Max-pooling operator is one of the most widely used pooling operators, and it outputs maximum values from rectangular neighborhoods (with the same size) of the input~\cite{DL}. 

Recall that the original scattering network does not use pooling operators. If we combine the scattering network with pooling operators, the computational cost of the network can be reduced. In a recent paper \cite{MT}, an approach for combining the scattering network with pooling operators such as average pooling and sub-sampling is studied, but the paper assumes pooling operators satisfy certain theoretical property, which does not hold true for max-pooling. This observation is the motivation of our paper. In this subsection, we propose a method for combining the scattering network with max-pooling. Since the scattering network is for the input on continuous domains in $\real^d$, we first model continuous max-pooling operators for continuous domains.

We model the continuous max-pooling for functions $f : \real^d \to \real$ with compact support. As the max-pooling is defined on rectangular domains, we define the rectangular compact region $D$ in $\real^d$ with certain properties, name it as a \textbf{continuous plate}, and model the continuous max-pooling on it.  
\begin{df}
	Let $\mathcal{A}$ be a collection of $f : \real^d \to \real$ with compact support. If there exist rectangular compact regions $D\subset \real^d$, and $D^{(i)}\subset \real^d$, $i=1, 2, \cdots,N$ satisfying \\
	$(a)\;$ $\cup_{f \in \mathcal{A}}{\rm supp}(f)\subset D$ and $0\in D$,\\
	$(b)\;$ $D = \bigcup_{i = 1}^{N} D^{(i)}$,\\
	$(c)\;$ $\forall i, |D^{(i)}| = |D|/N$,\\
	$(d)\;$ $\exists k \in \real^d \; s.t. \left|D^{(i)} -( D^{(j)} +k )\right| = 0 \text{~if~} i \ne j$, $\;$ ($\;|\cdot|$ denotes the Lebesgue measure),\\
	then we say that $D$ is a continuous plate of $\mathcal{A}$ and $D^{(i)}$ is the $i$-th sub-plate of $D$.
\end{df}

\begin{df}
	For $f \in L^{\infty}(D)$ and $S > (|D| \|f\|_{\infty}/ \|f\|_{2})^{1/d}$, we define the continuous max-pooling operator $\mrP : L^{\infty}(D) \to L^{\infty}(D/S)$ (with pooling factor $S$) as
	\begin{equation}
	\label{eq:poolingdef}
		\mrP(f)(x) := \sum_{i = 1}^{N} \| f \chi_{D^{(i)}} \|_{\infty} \chi_{D^{(i)}}(Sx).
		\end{equation}
	\end{df}
In the definition and throughout the paper, we let $L^{\infty}(D) := L^{\infty}(\real^d) \cap \mathcal{A}$ when $D$ is a continuous plate of $\mathcal{A}$. The continuous max-pooling $\mrP$ extracts the prominent values from each sub-plate by using the $L^{\infty}$ norm. Since $S >(|D| \|f\|_{\infty}/ \|f\|_{2})^{1/d} \geq 1$, the operator $\mrP$ reduces the plate $D$ to $D/S$. Moreover, using the condition $S >(|D| \|f\|_{\infty}/ \|f\|_{2})^{1/d}$, one can show that (we omit its proof for the sake of brevity)
\begin{equation} \label{eq:Lipschitz}
\forall f \in L^{\infty}(D),\quad \|\mrP(f)\|_{2} \leq \|f\|_{2}.
\end{equation}

Now we construct a new convolutional network. We define a \textbf{pooled scattering propagator} and replace the scattering propagator with it. 
More precisely, we apply the continuous max-pooling operator $\mrP$ to the output of the operator $U[\lam]$.
  \begin{df}
 For a path $p\in  \Lam_{J}^m$, 
  	a pooled scattering propagator $ \til{U}[p] : L^{\infty}(D) \to L^{\infty}(D/S^m)$ is defined by
  	\[ \til{U}[p]f(x) := \til{U}[\lam_m] \cdots \til{U}[\lam_2] \til{U}[\lam_1]f(x), ~~\forall f \in L^{\infty}(D), \]
  	where $\til{U}[\lam_k]f := \mrP(U[\lam_k]f) = \mrP(|\psi_{\lam_k} * f|)$ with the continuous max-pooling $\mrP$.
  	\end{df}
  Similarly as in \eqref{eq:window scattering}, the output of the pooled scattering propagator is finally low-pass filtered with the kernel $\phi_{2^J}$:
  \[ \til{S}_J[p]f(x) := (\phi_{2^J} * \til{U}[p]f)(x), ~~\forall f \in L^{\infty}(D).\]
Figure \ref{fg:networkm} shows an overview of the new network. The input $f$ is propagated to deeper layers by the pooled scattering propagator and is filtered by the kernel $\phi_{2^J}$. For a path $p$ of length $m$, the output of the windowed scattering transform $\til{S}_J[p]f$ constitutes the $m$-th network layer. We call the new network \textbf{scattering-maxp network}.

  \begin{figure}[t]
  	\centering
  	\includegraphics[width=1\textwidth]{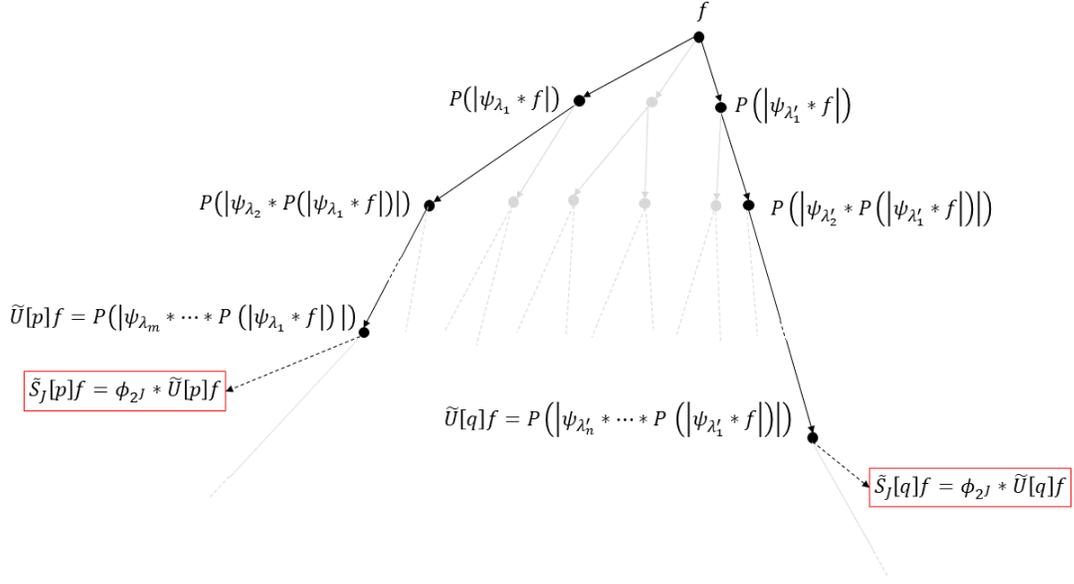}
  	\caption{Overview of the scattering-maxp network. It propagates the input $f$ to deeper layers by the pooled scattering propagators and then its output is low-pass filtered by the kernel $\phi_{2^J}$. We describe propagations along the paths $p$ and $q$. \label{fg:networkm}}
  	\end{figure}

The scattering network and the scattering-maxp network share many useful properties. They use pre-defined wavelets for the convolutional kernels, a propagation is determined by the path, etc. In fact, one can show that both the scattering network and the scattering-maxp network have the translation invariance, albeit in terms of slightly different meaning. This result is presented in the next section.
 
\section{Translation Invariance of Scattering-maxp Network}
\label{section:TR}
In computer vision, translation invariance means that the input and its translation give the same output. For classification tasks, translation invariance would help these two inputs to be in the same class. One common assumption on CNNs is that CNNs have the translation invariance \cite{DL}, but this assumption is usually verified by numerical experiments \cite{translation1,translation2}. On the contrary, it is proved rigorously in \cite{GIS} that the scattering network provides the translation invariance when the scaling level $J$ goes to the infinite. This result is stated below, and in the statement, the admissibility condition of the scattering wavelet is used. We omit its exact definition for the sake of brevity (see (2.27) in \cite{GIS} for the exact definition).

\begin{result}
	\label{result:TR}
	If admissible scattering wavelets are used for the scattering network, for any $f\in L^2(\real^d)$ and for any $c \in \real^d$, we have
	\[ \lim_{J \to \infty} \sum_{m=0}^{\infty}\sum_{p \in \Lam_{J}^m}\| S_{J}[p]f - S_{J}[p] L_c f\|_2^2 = 0,\]
	where $L_{c} f = f(\cdot-c)$ is the translation of $f$ by $c$.
\end{result}

One can show that our scattering-maxp network provides the translation invariance as well, under the assumption that the following condition (c.f.~Proposition 2.1 in \cite{GIS}) on the scattering wavelet $\{ \psi_{\lam} \}_{ \lam \in \Lam_{J}^1 }$ and the low-pass kernel $\phi_{2^J}$ holds:
\begin{equation}
	\| \phi_{2^J} * f\|_{2}^2 + \sum_{\lam \in \Lam_{J}^1} \| \psi_{\lam} * f\|_2^2 = \| f\|_2^2 ,\quad \forall f\in L^2(\real^d). \label{eq:unitary}
\end{equation}

\begin{thm}
	\label{thm:TR}
	Let $f \in L^{\infty}(D)$. Suppose that $c \in \real^d$ satisfies $0 \in D+c$ and that $|\hat{\phi}(\om)||\om| < B$ for some $B>0$ and a.e. $\om \in \real^d$. Then 
	\[ \lim_{m \to \infty}  \sum_{p \in \Lam_{J}^m}\| \til{S}_J [p] f - \til{S}_J[p]T_c f \|^2_2 = 0,\]
	where $T_c : L^{\infty}(D) \to L^{\infty}(D+c)$ is defined by $T_c f(x) := f(x-c).$ \label{thm:translation1}
\end{thm}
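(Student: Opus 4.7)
The plan is to exploit a commutation identity between the pooled scattering propagator and translation, which reduces the theorem to a standard Plancherel estimate at the final low-pass stage. First I would establish
\[ \til U[p] T_c f = T_{c/S^m}\,\til U[p] f \qquad \text{for every } p \in \Lam_{J}^m. \]
The ingredients are: convolution with $\psi_{\lam_k}$ and the modulus each commute with translation, so $U[\lam_k] T_c = T_c U[\lam_k]$; and a direct computation from the definition \eqref{eq:poolingdef} shows that the max-pooling built on the plate $D+c$ (with natural sub-plates $D^{(i)}+c$) applied to $T_c g$ equals $T_{c/S}$ applied to the max-pooling built on $D$ applied to $g$, because $\|(T_c g)\chi_{D^{(i)}+c}\|_\infty = \|g\chi_{D^{(i)}}\|_\infty$ and $\chi_{D^{(i)}+c}(Sx) = \chi_{D^{(i)}}(S(x - c/S))$. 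The hypothesis $0\in D+c$ is precisely what makes $D+c$ a legitimate plate for $T_c f$, so the recursion can be iterated through all $m$ layers. Since convolution with $\phi_{2^J}$ also commutes with translation, it follows that $\til S_J[p] T_c f = T_{c/S^m}\,\til S_J[p] f$.

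Next I would estimate a single path via Plancherel. Writing $g_p := \til U[p] f$,
\[ \|\til S_J[p]f - \til S_J[p]T_c f\|_2^2 = \int \bigl|1 - e^{-i\inpro{c/S^m,\om}}\bigr|^2\,|\hat\phi_{2^J}(\om)|^2\,|\hat g_p(\om)|^2\,d\om. \]
Using $|1 - e^{-i\theta}| \leq |\theta|$, the scaling $\hat\phi_{2^J}(\om) = \hat\phi(2^J\om)$, and the hypothesis $|\hat\phi(\om)||\om| < B$, one obtains $|\om|\,|\hat\phi_{2^J}(\om)| \leq 2^{-J}B$ for a.e.\ $\om$, hence
\[ \|\til S_J[p]f - \til S_J[p]T_c f\|_2^2 \;\leq\; \frac{|c|^2}{S^{2m}}\,2^{-2J} B^2\,\|\til U[p]f\|_2^2. \]

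Finally, I would sum over $p \in \Lam_{J}^m$. The non-expansiveness \eqref{eq:Lipschitz} of $\mrP$ combined with \eqref{eq:unitary} gives $\sum_{\lam\in\Lam_J^1}\|\til U[\lam]g\|_2^2 \leq \sum_{\lam}\|\psi_\lam * g\|_2^2 \leq \|g\|_2^2$ for any $g\in L^2$, and a straightforward induction on depth yields $\sum_{p\in\Lam_J^m}\|\til U[p]f\|_2^2 \leq \|f\|_2^2$ for every $m$. Combining everything,
\[ \sum_{p\in\Lam_J^m}\|\til S_J[p]f - \til S_J[p]T_c f\|_2^2 \;\leq\; \frac{|c|^2}{S^{2m}}\,2^{-2J} B^2\,\|f\|_2^2, \]
which tends to $0$ as $m\to\infty$ since $S>1$. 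The main obstacle is the commutation identity in the first step; once it is in place, the rest follows the standard low-pass translation estimate, with the geometric shrinkage $S^{-m}$ of the effective translation, produced by iterated max-pooling, playing the role that the spreading of $\phi_{2^J}$ plays in Result~\ref{result:TR}; this is why the limit here is taken in $m$ with $J$ fixed.
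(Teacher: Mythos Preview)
Your proposal is correct and follows essentially the same approach as the paper: establish the commutation $\mrP(T_c g)=T_{c/S}\mrP(g)$ (hence $\til U[p]T_c f=T_{c/S^m}\til U[p]f$), apply Plancherel and the bound $|1-e^{-i\theta}|\le|\theta|$ together with $|\hat\phi(\om)||\om|<B$ at the low-pass stage, and control $\sum_{p\in\Lam_J^m}\|\til U[p]f\|_2^2\le\|f\|_2^2$ via \eqref{eq:Lipschitz} and \eqref{eq:unitary}. Your version is in fact slightly sharper, since you carry the extra $2^{-2J}$ factor coming from $|\om|\,|\hat\phi_{2^J}(\om)|\le 2^{-J}B$, and you make explicit why the hypothesis $0\in D+c$ is needed for the iteration.
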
 
\begin{proof}
Since $ \sum_{p' \in \Lam_{J}^{m+1}} \|\til{U}[p']f \|_2^2 = \sum_{p \in \Lam_{J}^m} \sum_{\lam \in \Lam_{J}^1} \| \til{U}[\lam] \til{U}[p] f\|_2^2$, by \eqref{eq:Lipschitz} and \eqref{eq:unitary}, 
$$\sum_{p' \in \Lam_{J}^{m+1}} \|\til{U}[p']f \|_2^2
\le \sum_{p \in \Lam_{J}^m} ( \| \phi_{2^J}*\til{U}[p]f  \|_2^2 + \sum_{\lam \in \Lam_{J}^1} \|  \psi_{\lam} * \til {U}[p]f \|_2^2 )
=\sum_{p \in \Lam_{J}^m} \| \til{U}[p]f\|_2^2.
$$
As a result,  $ \sum_{ p \in \Lam_{J}^m} \| \til{U}[p] f \|_2^2 $ is a decreasing sequence in $m$, bounded by $ \|f\|_2^2$. For any $f \in L^\infty(D)$, by \eqref{eq:poolingdef}, we have
\begin{eqnarray}
	\mrP (T_c f)(x)  & = & \sum_{i = 1}^{N} \|(T_cf) \chi_{D^{(i)}+c}) \|_{\infty} \chi_{D^{(i)} +c} (Sx) \nonumber \\ & = & \sum_{i=1}^{N} \| f \chi_{D^{(i)}}\|_{\infty}  \chi_{D^{(i)}}(Sx - c) = T_{c/S}\mrP(f)(x). \nonumber
\end{eqnarray}
Since $\left| 1-e^{-i \inpro{c,\om}/{S^m} } \right| = 2| \sin(\inpro{c,\om}/(2S^m)| \leq |c| |\om|/S^m$ and $|\hat{\phi}(\om)| |\om| < B $, we get
\begin{eqnarray*} 
&{}&\sum_{p \in \Lam_{J}^m} \left\|  \til{S}_J [p] f - \til{S}_J [p] T_c f \right\|_2^2= \sum_{p \in \Lam_J^m} \left\| \widehat{(\til{S}_J[p]f)} - \widehat{(\til{S}_J[p]T_c f)} \right\|_2^2 \\
	& = & \sum_{p \in \Lam_J^m} \left\|\widehat{\phi_{2^J}} \widehat{{(\til{U}[p]f})} (1-e^{-i \inpro{c,\om}/{S^m} } ) \right\|_2^2  
	 \leq  \frac{|c|^2 B^2}{S^{2m}} \sum_{p \in \Lam_J^m} \| \widehat{{\til{U}[p]f}} \|_2^2 
	\leq \frac{|c|^2 B^2}{S^{2m}} \|f \|_2^2.
	\end{eqnarray*}
As the last quantity goes to $0$ as $m \to \infty$, we get the claimed statement.
\end{proof}
Note that we define the translation operator $T_c$ by shifting the function itself and its plate, and this is different from the translation operator $L_c$ used for the scattering network in Result~\ref{result:TR}. Theorem~\ref{thm:TR} says that as the scattering-maxp network gets deeper, the network gets closer to the translation invariant one. In the proof, the condition $S > 1$ is used essentially. Recall that the original scattering network does not use pooling operators. By inserting the pooling operator to the scattering network, we get the translation invariance, which is different from the scattering network.

\section{Numerical Experiments}
\label{section:Exp}

\begin{figure}[t]
	\centering
	\begin{subfigure}[t]{1\textwidth}
		\centering
		\includegraphics[width=0.9\textwidth]{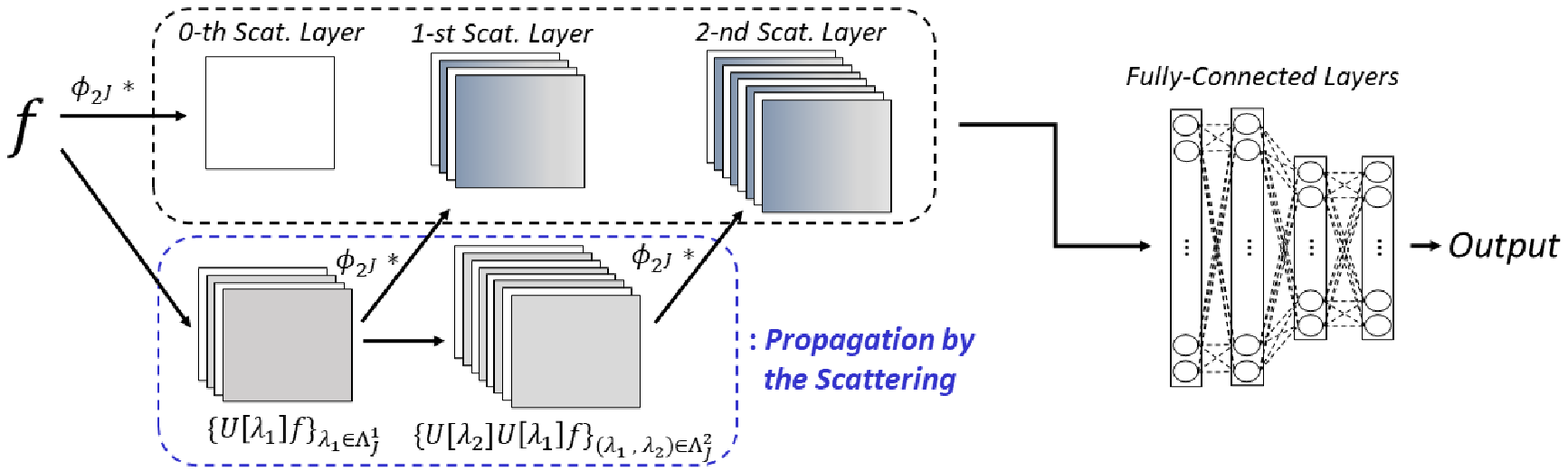}
		\caption{\textbf{Scattering} (or \textbf{Original scattering}). }
	\end{subfigure}%
	
	\begin{subfigure}[t]{1\textwidth}
		\centering
		\includegraphics[width=0.9\textwidth]{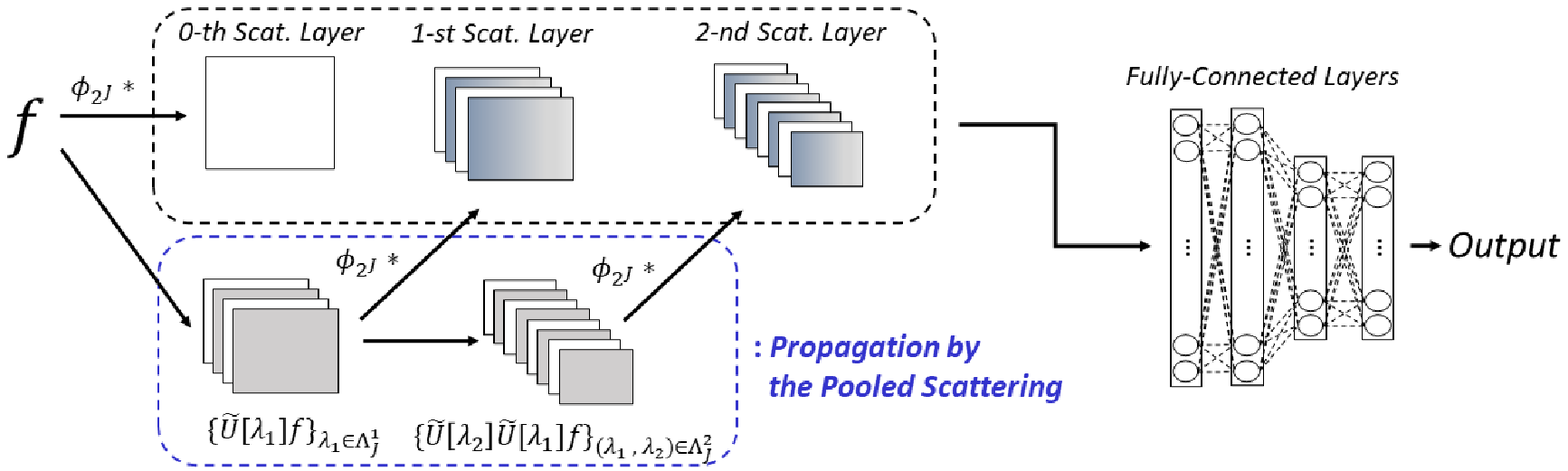}
		\caption{\textbf{Scattering-maxp}.}
	\end{subfigure}
	\begin{subfigure}[t]{1\textwidth}
		\centering
		\includegraphics[width=0.9\textwidth]{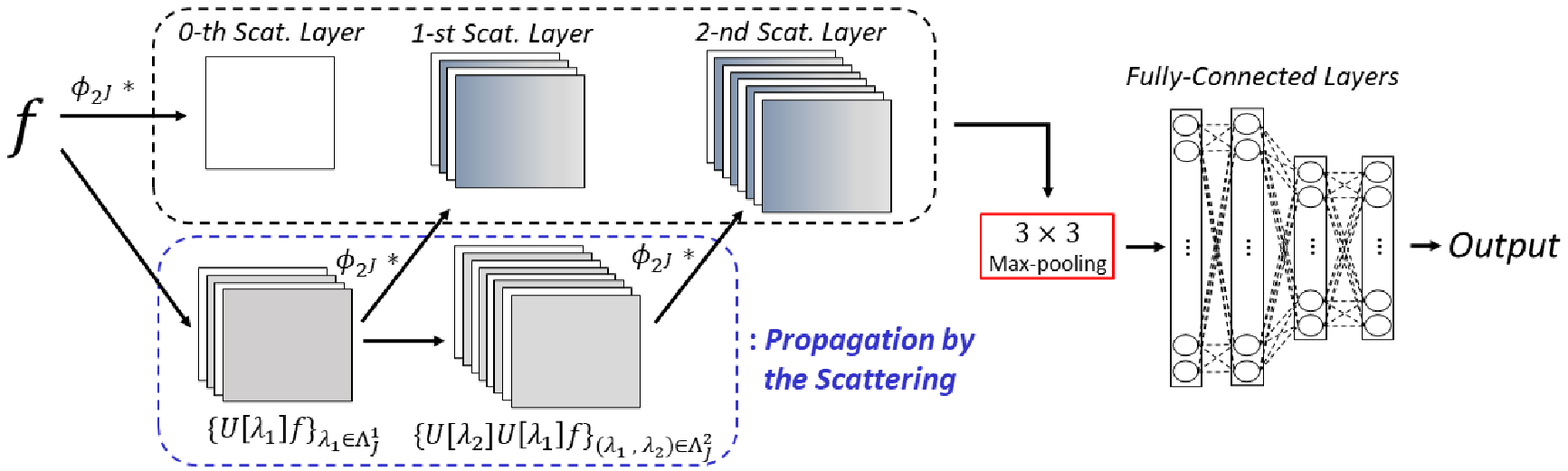}
		\caption{\textbf{Scattering-naivep}.}
	\end{subfigure}
	\caption{Three scattering based models for image classification experiments. \label{fg:experiment model}}
\end{figure}

In this section, we build three scattering network based models and evaluate them by numerical experiments on image classification. We also compare these models with three well-known CNNs (without explanation about their structures): VGG-16 \cite{VGG}, ResNet34 \cite{ResNet}, MobileNet \cite{MobileNet}. 
We use Caltech-101 \cite{Caltech-101} and Caltech-256 \cite{Caltech-256} databases for our experiments. We follow the set-up in \cite{kymatio} for the scattering wavelets and low-pass kernel. We use the Morlet wavelet \cite{Morlet} for $\psi$, and the Gabor filter \cite{Gabor} for $\phi$. For the exact set-up of scattering wavelets and low-pass kernel, see \cite{ISN,kymatio}. 

\subsection{Training Details and Model Preparations}
\label{section:Preparations}
For all of our experiments, the following training details are applied.
\begin{itemize}\setlength\itemsep{-0.1cm}
	\item CPU: Intel(R) Xeon(R) Gold 5210 @ 2.20GHz / GPU: Tesla V100-32GB.
	\item TensorFlow 1.15.1 / Keras 2.2.4-tf.
	\item Loss: Categorical cross-entropy / Optimizer: Adam
	\item Data Augmentations: Horizontal flip \& 8 rotations from $-20^{\circ}$ to $20^{\circ}$.
	\item Batch Size: 256
	\item Ratio of the Training Data and the Validation Data: 3 to 1.
	\item Input Size: $224 \times 224$. 
\end{itemize}
Figure \ref{fg:experiment model} depicts three scattering based models. We call the first model \textbf{scattering}\footnote{\scat}. For an input $f$, we compute $\phi_{2^J} * U[p]f$ along the path $p$ of length $m$, where $m = 0, 1, 2$, so we compute three scattering layers. These scattering layers are concatenated and then connected to the $4$ FC layers with ReLU nonlinear operator. The first two FC layers have $512$ units and the rests have $256$ units. We obtain the output at the end computed by Soft-max. 
We call the second model \textbf{scattering-maxp}\footnote{\scatmax}. We use pooled-scattering propagators by inserting the $2 \times 2$ max-pooling (with $2\times 2$ strides) to the output of the scattering propagators. The rest of the model is same as the first model. For the third model, we consider applying max-pooling naively (without theoretical ground) to the scattering network, and call the resulting model \textbf{scattering-naivep}. This model has the same structure as the first model, but  $3 \times 3$ max-pooling (with $3 \times 3$ strides) is added right after the concatenation of three scattering layers.

\subsection{Image Classification Results}
\begin{figure}[t]
	\centering
	\includegraphics[width = 1\textwidth]{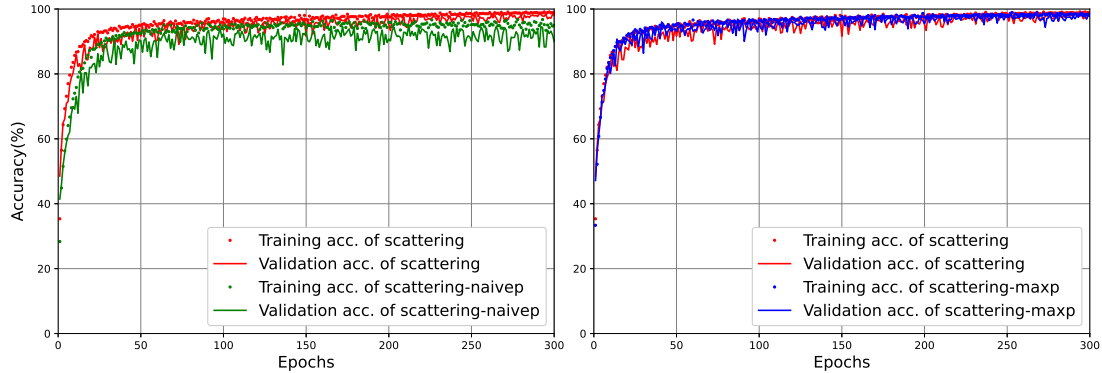}
	\caption{Performance of the scattering based models on Caltech-101 (300 epochs). The learning rate is $10^{-3}$ during the whole training session. \label{fg:Caltech-101}}
\end{figure}

\begin{table}[tp]
	\begin{center}
		\caption{\label{tab:Caltech-101}%
			Classification results of Caltech-101.}
		{
			\renewcommand{\baselinestretch}{1}\footnotesize
			\begin{tabular}{c|c|c|c}
				\multicolumn{1}{c|}{\textbf{Model}} & \multicolumn{1}{|c|}{\textbf{$\#$ of parameters}} & \multicolumn{1}{|c|}{\textbf{Accuracy} (\%)} & \multicolumn{1}{|c}{\textbf{Training time} (s/epochs)} \\
				\hline
				\multicolumn{1}{l|}{VGG-16} & \multicolumn{1}{|c|}{\small 134,677,286} & \multicolumn{1}{|c|}{99.58} & \multicolumn{1}{c}{566}  \\
				\multicolumn{1}{l|}{ResNet34} &\multicolumn{1}{|c|} {21,344,166} & \multicolumn{1}{|c|}{99.21}&  \multicolumn{1}{|c}{221}  \\
				\multicolumn{1}{l|}{MobileNet} &\multicolumn{1}{|c|}{\textbf{3,332,742}} & \multicolumn{1}{|c|}{\textbf{99.95}} & \multicolumn{1}{|c}{419} \\
				\hline
				\multicolumn{1}{l|}{Scattering} &\multicolumn{1}{|c|}{87,592,038} & \multicolumn{1}{|c|} {98.49} & \multicolumn{1}{|c}{284} \\
				\multicolumn{1}{l|}{Scattering-naivep} &\multicolumn{1}{|c|}{11,596,902} & \multicolumn{1}{|c|} { 94.54} & \multicolumn{1}{|c}{267} \\
				\multicolumn{1}{l|}{Scattering-maxp} & \multicolumn{1}{|c|}{9,944,166} & \multicolumn{1}{|c|} {98.59} & \multicolumn{1}{|c}{\textbf{206}}
		\end{tabular}}
	\end{center}
\end{table}

\begin{figure}[t]
	\centering
	\includegraphics[width = 1\textwidth]{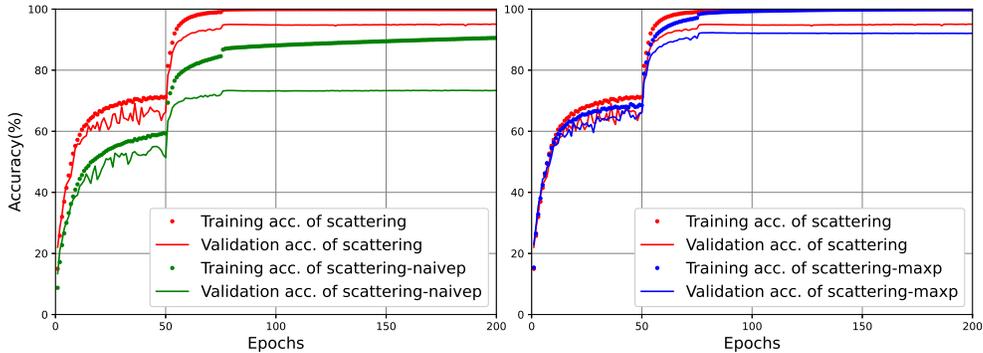}
	\caption{Performance of scattering based models on Caltech-256 (200 epochs). The learning rate is adjusted as $10^{-3}$ initially, $10^{-4}$ after 50, and $10^{-5}$ after 75 epochs. \label{fg:Caltech-256}}
\end{figure}
\begin{table}[tp]
	\begin{center}
		\caption{\label{tab:Caltech-256}%
			Classification results of Caltech-256.}
		{
			\renewcommand{\baselinestretch}{1}\footnotesize
			\begin{tabular}{c|c|c|c}
				\multicolumn{1}{c|}{\textbf{Model}} & \multicolumn{1}{|c|}{\textbf{$\#$ of parameters}} & \multicolumn{1}{|c|}{\textbf{Accuracy} (\%)} & \multicolumn{1}{|c}{\textbf{Training time} (s/epochs)} \\
				\hline
				\multicolumn{1}{l|}{VGG-16} & \multicolumn{1}{|c|}{135,312,321} & \multicolumn{1}{|c|}{98.58} & \multicolumn{1}{|c}{1926}   \\
				\multicolumn{1}{l|}{ResNet34} &\multicolumn{1}{|c|} {21,423,681} & \multicolumn{1}{|c|}{99.97}&  \multicolumn{1}{|c}{750}  \\
				\multicolumn{1}{l|}{MobileNet} &\multicolumn{1}{|c|}{\textbf{3,491,617}} & \multicolumn{1}{|c|}{\textbf{99.99}} & \multicolumn{1}{|c}{1433}  \\
				\hline
				\multicolumn{1}{l|}{Scattering} &\multicolumn{1}{|c|}{87,631,873} & \multicolumn{1}{|c|} {95.06} & \multicolumn{1}{|c}{1006} \\
				\multicolumn{1}{l|}{Scattering-naivep} &\multicolumn{1}{|c|}{11,636,737} & \multicolumn{1}{|c|} {73.38} & \multicolumn{1}{|c}{982} \\
				\multicolumn{1}{l|}{Scattering-maxp} &  \multicolumn{1}{|c|}{9,984,001} & \multicolumn{1}{|c|} {92.11} & \multicolumn{1}{|c}{\textbf{651}}
		\end{tabular}}
	\end{center}
\end{table}

\noindent\textbf{Caltech-101:} Caltech-101 \cite{Caltech-101} consists of 9,146 images of insects, animals, foods, etc. within $102$ categories. As we apply the set-up in Subsection \ref{section:Preparations}, we conduct classification tasks on 116,586 training data and 38,862 validation data. Figure \ref{fg:Caltech-101} shows the performance of the scattering based models. The original scattering achieves about $98 \%$ accuracy. The scattering-naivep achieves about $94\%$ accuracy, which shows the naive max-pooling reduces the performance. In contrast, the scattering-maxp achieves competitive accuracy with the original one. Table \ref{tab:Caltech-101} shows the best results of these models with other CNN models. The three CNNs achieve slightly higher accuracy than the scattering based models. The MobileNet achieves the highest accuracy among all models, but our scattering-maxp has the fastest training time.

\noindent\textbf{Caltech-256:} Caltech-256 \cite{Caltech-256} consists of 30,607 images of animals, foods, insects, machines etc. within $257$ categories. Using the set-up in Subsection \ref{section:Preparations}, we conduct classification tasks on 390,239 training data and 130,080 validation data. Figure \ref{fg:Caltech-256} shows the performance of the scattering based models. The scattering achieves about $95\%$ accuracy. The scattering-naivep achieves about $73\%$ accuracy, which shows inserting the max-pooling naively yields drastic decrease of accuracy. But our scattering-maxp achieves about $92 \%$ accuracy. Table \ref{tab:Caltech-256} shows the best results of these models with other CNN models. Similar to the results on Caltech-101, MobileNet ranks the first in accuracy and our scattering-maxp ranks the first in training time. 

\section{Conclusions}
We modeled continuous max-pooling for continuous domain, combined it with the scattering network, and constructed the new convolutional network, called scattering-maxp network. We proved that the scattering-maxp network provides the translation invariance as the network gets deeper. We showed, by numerical experiments on image classification, that the scattering-maxp network can replace the convolutional layers in CNNs without too much loss of accuracy and with reduced training time.

\section*{Acknowledgment}
This work was supported in part by the National Research Foundation of Korea (NRF) [Grant Number  2015R1A5A1009350], and
was in part the result of a study on ``HPC Support" Project, supported by the `Ministry of Science and ICT’ and NIPA.
\section*{References}
\bibliographystyle{IEEEbib}
\bibliography{ms}

\end{document}